\documentclass[a4paper, 12pt]{article}
\usepackage{epsfig,amsmath,amssymb,latexsym} % ,psfrag
\usepackage{amscd }
\usepackage{amsfonts}
\usepackage{graphicx}

\newtheorem{theorem}{Theorem}[section]

\newtheorem{lemma}{Lemma}[section]

\newenvironment{proof}[1][Proof]{\par\noindent\textbf{#1.} }{\ \rule{0.5em}{0.5em}}
\newcommand{\N}{\mathbb{N}}

\numberwithin{equation}{section}

\def\VV{{\cal V}}

%-
\begin{document}
%--

\begin{center}
{\huge \textbf{CLT for the proportion of infected individuals for an epidemic model on a complete graph}}\\[0pt]
\vspace{1cm} {\large F. Machado, H. Mashurian and H. Matzinger}
\end{center}

\begin{abstract}We prove a Central Limit Theorem for the
proportion of infected individuals for an epidemic model by
dealing with a discrete time system of simple random walks on a
complete graph with $ n $ vertices. Each random walk makes a role
of a virus. Individuals are all connected as vertices in a complete
graph. A virus duplicates each time it hits a susceptible individual, 
dying as soon as it hits an already infected individual. The
process stops as soon as there is no more viruses.  This model is
closely related to some epidemiologial models like those for virus
dissemination in a computer network.
\end{abstract}
\vspace{0.5cm}

\section{Introduction}

We prove a Central Limit Theorem for the proportion of infected
individuals for an epidemic model. We consider a discrete time 
system of simple random walks on $ K_n $, the $ n $-complete graph, 
a graph with vertex set $\VV = \{1,2, \dots, n\}$ and each pair of vertices
linked by an edge.

This model, also known as frog model, 
has been mostly considered on infinite graphs, in particular hypercubic 
lattices and homogeneous trees, for which results as shape theorem 
and phase transition have been proved. See for 
instance~\cite{PT},~\cite{IUB},~\cite{sofa},~\cite{KPV},~\cite{Serguei},~\cite{RV},
~\cite{TW} and the references therein. A comprehensive introduction on random walks on 
finite and infinite graphs can be found in~\cite{AF}. 

In this paper we deal with a discrete time process on $K_n$ 
evolving as follows. At time zero there is one inactive particle at each vertex of $ K_n.$
A particle is chosen to become active and by its turn that active
particle chooses a vertex to jump at, also activating the particle sitting there.
As at each time just one active particle makes a displacement, one active particle is
uniformely choosen to make its move. From that time on, each active particle perform a random
walk on the vertices of $ K_n $, activating all inactive particles
it meets along its way. Each active particle lives while it
chooses vertices with an inactive particle on it, dying at the
first time it chooses to jump on a vertex which has been visited
before by some active particle. The process continues until there
are no more active particles. 

Considering
\[ V_t =  \hbox{ the number of vertices visited by the process up to time } t, \]
we denote by $V_{\infty} = \lim_{t \to \infty} V_t,$ the number
of vertices which have been visited by active particles when the
process comes to an end. We investigate the asymptotic
distribution of the random variable $V_{\infty}$. The main result
of this paper (Theorem~\ref{maintheorem}) shows that properly
re-scaled, $V_{\infty}$ converges in distribution to a normal
random variable.

Let us formally define the model whose dinamic takes place on $ K_n $. 
First we define $A_t$, $D_t$ and $I_t$ as the number of active particles 
at time $t$, the number of vertices whose original particles have already died up
to time $t$ and the number of particles still inactive at time
$t$, respectively. In this sense, $ V_t = A_t + D_t $ and $ A_t + D_t + I_t = n,$ for all 
discrete time $t.$ Note that $\{(A_t, D_t, I_t)\}_{t \geq 0}$ is a Markov chain 
going

\begin{equation}
\label{E: mctp}
\hbox{from } (a, n-(i+a), i) \left\{\begin{array}{ll}
\hbox{ to } (a+1, n-(i+a), i-1) & \mbox{w.p. $\frac{i}{n}$,} \\ \hbox{ or } \\
\hbox{ to } (a-1, n-(i+a)+1, i) & \mbox{w.p. $\frac{n-i}{n}$,}
\end{array}\right.
\end{equation}
for discrete values of $a \in \{1,2,\dots, n\}$ and $i \in \{0,1,2, \dots, n-1\}.$ 
The chain starts from $A_0=1, D_0=0$ and $V_0 = n-1$ and comes to an end 
as soon as, for some discrete time $t$, $A_t=0.$
Besides, let $\{S_t\}_{t \ge 0}$ denote a set of independent uniformly distributed random
variables on $\VV$, the set of vertices of $K_n$. At each time $ t
$ one active particle (also uniformly chosen among the $A_{t-1}$
active particles), choose the vertex $S_t$ to jump to. It meets
and activates a still inactive particle if and only if $ S_t
\not\in \{S_1, \dots, S_{t-1} \}.$ In this case $ A_{t}=A_{t-1}+1
$. Otherwise that active particle dies, then $ A_{t}=A_{t-1}-1.$
Observe that $ A_{\infty} := \lim_{t \to \infty} A_t = 0 .$ For
simulations and mean field analysis see~\cite{ALMM}

Let $q$ be the only non-zero solution to the equation
$$2p=-\ln(1-p)$$
in $[0,1[$. (See also lemma \ref{map}.)
Let $\mu_r$ be equal to
\[ \mu_r:=2-\frac{1}{1-q}. \]
Finally let $\sigma$ be equal to
\[ \sigma:=\frac{\sqrt{\int_0^q\frac{x}{(1-x)^2}dx}}{\mu_r}=\frac{\sqrt{\frac{q-2q^2}{q-1}}}{\mu_r}. \]
We are now ready to formulate the main theorem of this paper
\begin{theorem}
\label{maintheorem}

We have that
$$\frac{V_{\infty}-qn}{\sigma\sqrt{n}}\rightarrow \mathcal{N}(0,1)$$
as $n$ goes to infinity, where $\rightarrow$ means convergence  in
law.
\end{theorem}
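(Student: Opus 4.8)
The plan is to recast $V_\infty$ as the first passage of a sum of independent random variables across a linear boundary, prove a law of large numbers that identifies the limit $q$, and then upgrade it to a central limit theorem by a Lindeberg--Feller argument combined with a comparison of drift and local fluctuations near the crossing. First I would read off the combinatorial structure of the chain~\eqref{E: mctp}. While exactly $k$ vertices have been visited, each jump lands on an unvisited vertex (a ``success'', raising both $V$ and the number $A$ of active particles by $1$) with probability $(n-k)/n$, and on an already visited vertex (a ``failure'', lowering $A$ by $1$) with probability $k/n$; these events are governed by disjoint blocks of the i.i.d.\ variables $\{S_t\}$. Hence, if $N_j$ denotes the number of failures accumulated while $V=j$, the $N_j$ are independent geometric variables with $P(N_j=i)=\frac{n-j}{n}\bigl(\frac{j}{n}\bigr)^{i}$. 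Since $A_t=V_t-(\text{failures so far})$ and the process halts exactly when $A$ first reaches $0$, a short bookkeeping gives
\[
 V_\infty=\min\Bigl\{k\ge 1:\ S_k\ge k\Bigr\},\qquad S_k:=\sum_{j=1}^{k}N_j,
\]
equivalently $V_\infty=\min\{k\ge 1: M_k\ge 0\}$ with $M_k:=S_k-k=\sum_{j=1}^k(N_j-1)$. This representation is what everything rests on.

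Next I would compute the first two moments of $M_k$. With $v=k/n$ one has $E[N_j]=\frac{j/n}{1-j/n}$ and $\mathrm{Var}(N_j)=\frac{j/n}{(1-j/n)^2}$, so a Riemann-sum approximation gives $E[M_k]/n\to -\bigl(2v+\ln(1-v)\bigr)$ and $\mathrm{Var}(M_k)/n\to\int_0^v\frac{x}{(1-x)^2}\,dx$, uniformly for $v$ in compact subsets of $[0,1)$. The mean $E[M_k]$ is negative precisely for $k/n\in(0,q)$ and vanishes at $k/n=q$, because $2v+\ln(1-v)=0$ is exactly the equation $2p=-\ln(1-p)$ defining $q$; its slope at the crossing equals $2-\frac{1}{1-q}=\mu_r$ (which is negative). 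Since the increments $N_j-1$ are geometric with parameter bounded away from $0$ on $\{k\le(q+\varepsilon)n\}$, their moment generating functions are uniformly controlled there, and a Chernoff bound gives $P(M_k\ge 0)\le e^{-cn}$ for every $k\le(q-\varepsilon)n$; a union bound over the at most $n$ such indices yields $P(V_\infty\le(q-\varepsilon)n)\to0$, while $P(M_{\lceil(q+\varepsilon)n\rceil}\ge 0)\to1$ gives the matching upper bound. This establishes $V_\infty/n\to q$ and localizes the first passage to a window of width $O(\sqrt n)$ about $qn$.

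For the central limit theorem I would fix $x\in\R$, set $k_n=\lfloor qn+x\sigma\sqrt n\rfloor$, and use the identity $\{V_\infty\le k_n\}=\{\max_{j\le k_n}M_j\ge0\}$. Applying Lindeberg--Feller to the triangular array $\{N_j-1\}_{j\le k_n}$ (the Lyapunov condition holds because the parameters $j/n$ stay bounded away from $1$) gives $M_{k_n}/\sqrt{\mathrm{Var}(M_{k_n})}\Rightarrow\mathcal N(x,1)$, whence $P(M_{k_n}\ge0)\to\Phi(x)$, where $\Phi$ is the standard normal distribution function. It then remains to replace $\max_{j\le k_n}M_j$ by the single value $M_{k_n}$. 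The decisive point is a separation of scales: over any window of $O(\sqrt n)$ steps near $qn$ the accumulated drift is of order $\sqrt n$, whereas the accumulated standard deviation of the increments is only of order $\sqrt{\sqrt n}=n^{1/4}$. Thus on the relevant window $M_j$ is, up to an $o(\sqrt n)$ error, a straight line of positive slope $|\mu_r|$ shifted by the essentially frozen fluctuation $M_{qn}-E[M_{qn}]$; it crosses $0$ essentially once and cannot return below $0$ before $k_n$. Making this quantitative (for instance a maximal inequality showing $P(M_{k_n}<0,\ \max_{j\le k_n}M_j\ge0)\to0$) yields $P(V_\infty\le k_n)\to\Phi(x)$, which is exactly $\frac{V_\infty-qn}{\sigma\sqrt n}\Rightarrow\mathcal N(0,1)$, the constant $\sigma=\sqrt{\int_0^q\frac{x}{(1-x)^2}dx}/\mu_r$ being precisely $|\mu_r|^{-1}$ times the limiting standard deviation of $n^{-1/2}M_{qn}$.

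The main obstacle is this last step, converting the first-passage event into the sign of $M_{k_n}$. The moment computations, the identification of $q$, and the law of large numbers are routine; the delicate part is ruling out that fluctuations carry $M$ back and forth across $0$ within the $O(\sqrt n)$ window. The scale-separation heuristic above is the right idea, but turning it into clean estimates---controlling $\max_{j\le k_n}M_j$ uniformly, or equivalently invoking an Anscombe-type random-time-change theorem adapted to a non-identically-distributed array---is where the real work lies.
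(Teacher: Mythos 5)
Your setup is, in substance, identical to the paper's own: your increments $N_j-1$ are (up to sign and an immaterial off-by-one) the paper's $Y_s=2-X_s$, your $M_k$ is $-W_k$, your first-passage representation $V_\infty=\min\{k: M_k\ge 0\}$ is the paper's $\tau=\min\{s: W_s\le 1\}$, and your Riemann-sum moment computations, the identification of $q$ and of the slope $\mu_r$, and the Lindeberg--Feller step at a deterministic index are exactly the content of Lemmas \ref{ws}, \ref{map}, \ref{lemmavar} and the paper's concluding appeal to the CLT for independent, non-identically distributed summands. Up to that point your outline is correct, and the constant $\sigma$ comes out right.

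The gap is the step you yourself defer (``where the real work lies''): converting the first-passage event into the sign of $M$ at a fixed index. That conversion is not a loose end one can wave at --- it is the paper's main technical content. The paper proves a strong approximation (Lemma \ref{lemmacombinatorics}): on the events $B^n_0$ (no deaths among the first $n^{1/4}$ jumps, so the passage time exceeds $n^{1/4}$), $B^n_1$ (which, combined with Lemma \ref{lemmawbigger}, excludes any zero of $W$ before $qn-(\ln n)^2\sqrt n$), and $B^n_2$ (the increments over the window of width $(\ln n)^2\sqrt{n}$ around $qn$ track the linear drift $\mu_r$ to within $(\ln n)^3 n^{1/4}$), the path $s\mapsto W_s$ is sandwiched between two straight lines of slope $\mu_r$, forcing $|\tau-(qn-W^*_{qn}/\mu_r)|\le 2(\ln n)^3 n^{1/4}$; Lemmas \ref{4}, \ref{5} and \ref{6} (the last resting on a dedicated large-deviation estimate for geometric variables) then show $P(B^n_i)\to 1$. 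None of this is supplied by your sketch. Moreover, the specific statement you propose to prove, $P\bigl(M_{k_n}<0,\ \max_{j\le k_n}M_j\ge 0\bigr)\to 0$, hides a genuine subtlety: conditionally on the first passage occurring at $j^*\le k_n$, the probability that $M_{k_n}<0$ is small only when $k_n-j^*$ is large, so passages within $O(1)$ of $k_n$ must be ruled out, and that requires an anti-concentration (concentration-function or local-CLT) input for $M_{k_n}$ which you do not have at this stage of the argument. A clean way to close your version without it is a one-sided enlargement: taking $L\to\infty$ with $L=o(\sqrt n)$, one has $\{M_{k_n}\ge 0\}\subset\{V_\infty\le k_n\}\subset\{M_{k_n+L}\ge 0\}\cup E_n$, where $E_n=\{V_\infty\le k_n,\ M_{k_n+L}<0\}$ has $P(E_n)\le e^{-cL}$ by a conditional Chernoff bound applied to the at least $L$ fresh increments after the passage time, while $P(M_{k_n+L}\ge 0)\to\Phi(x)$ because a shift by $L$ is negligible at scale $\sqrt n$. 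With that estimate (or with the paper's $B^n_0,B^n_1,B^n_2$ machinery) your outline becomes a proof; as written, the crux is asserted rather than proved.
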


This model can be viewed as an oriented dependent long range
percolation model once one consider the analogous setup on an
infinite connected graph. The main difficulty in answering the
classical questions related to phase transition and shape theorem
in this setup is that the classical coupling techniques cannot be
applied, besides both FKG and BK inequalities fail. In~\cite{CQR}
authors construc a very interesting renewal structure leading to a
definition of regeneration times for which tail estimates are
performed.

Another possible approach and source of interest is to see this
model as an option for modelling the spread of a disease in a
population or spread of viruses in a computer network. Following
the setup we use in this paper the virus duplicates any time it
infects a susceptible individual. Once that happens the individual
becomes immune. The virus dies the first time it tries to infect a
immune individual. The population here is considered finite and
have full contact as every individual can be contacted directly by
any other individual. The  main question we investigate in this
paper corresponds to determine the distribution of the percentage
of the population which escaped from the disease remaining not
infected (but still susceptible) after all the virus are dead. 
For simulations and mean field analysis of this model see~\cite{ALMM}.

\section{Main Ideas}

Let us define $T(s)$, the time it takes for the process to reach $s$ visited
vertices. So, consistently with the process definition, $T(0)=1.$ For $s \in \{2, \dots,n\},$
let

\[ T(s) = \min\{t \in \N: V_t=s\} \]

\noindent
and

\[\rho = \min\{t: A_t = 0\}. \]

Observe that $ A_{\infty} := \lim_{t \to \infty} A_t = A_{\rho} $. From~(\ref{E: mctp}), 
note also that when there are $s=n-i$ visited vertices, each active particle which jumps has
a probability of $s/n$ to die and a probability of $(n-s)/n$ to hit an
inactive particle.

For all $ s $ such that the process has reached the level of $ s $
visited vertices, we define ${\bar X}_s$ as the time the process
spent at that level. Besides ${\bar X}_s$ can also be seen as the
(random) number of active particles which have to jump in so that
the number of visited vertices either goes from $s$ to $s+1$ or
the process finishes.

%So,
%
%\[ {\bar X}_s = (T(s+1)-T(s))(1_{\{T(s)=\infty\}}+1_{\{T(s+1)<\infty\}}) +
%(\rho-T(V_{\infty}))1_{\{T(s+1)-T(s)=\infty\}}\]
%
%
%($\infty - \infty = 0 $). Note that when there are $s$ visited
%vertices, each active particle which jumps has a probability of
%$s/n$ to die and a probability of $(n-s)/n$ to hit an inactive
%particle. 

For the number of visited vertices to go from $s$ to
$s+1$, we need one additional unvisited vertex to be chosen.
Hence,

\[ {\bar X}_s = \left\{\begin{array}{ll}
%{r@{\quad \hbox{w.p.} \quad}}
1 & \mbox{w.p. $(\frac{n-s}{n})$} \\
\cdots & \\
k & \mbox{w.p. $(\frac{s}{n})^{(k-1)}(\frac{n-s}{n})$} \\
\cdots & \\
A_{T(s)}-1 & \mbox{w.p. $(\frac{s}{n})^{(A_{T(s)}-2)}(\frac{n-s}{n})$} \\ \\
A_{T(s)} & \mbox{w.p. $(\frac{s}{n})^{(A_{T(s)}-1)}$}
\end{array}\right.\]

In other words

\[ {\bar X}_s \sim \min\{{\cal G}(\frac{n-s}{n}), A_{T(s)}\} \]

\noindent
where ${\cal G}$ stands for the geometric probability distribution. 

Observe that for realizations of the process such that ${\bar X}_s=A_{T(s)},$ either
the process stops at time $T(s)+A_{T(s)}$ and $T(s+1) = \infty$ or $T(s+1) = T(s) + A_{T(s)}$.

%To be more precise, for all $k=1,2,\dots,V_{\infty}-1$
%\[{\bar X}_{V_{\infty}-k} \sim  {\cal G}(\frac{n-V_{\infty}+k}{n})\]
%\noindent
%while
%\[{\bar X}_{V_{\infty}} = A_{T(V_{\infty})}\]

Going from $s$ to $s+1$ visited vertices, the change in the amount
of active particles is designated by ${\bar Y}_s$. For $s$ such that $T(s+1)<\infty$ we define

\[ {\bar Y}_s := A_{T(s+1)}-A_{T(s)} = 2-{\bar X}_s.\]

So we have that

\[ A_{T(s)} = \sum_{i=1}^{s-1} {\bar Y}_i \]

Besides $ {\bar Y}_{V_{\infty}}=-A_{T(V_{\infty})}. $ Note that the
variables ${\bar Y}_1, {\bar Y}_2, \dots, {\bar Y}_i$ are independents
on the event $\{i \le V_{\infty}-1\}$. They are not identically distributed.
 
We now make up an approximation for the model by considering for $s=1,2,\dots$
a sequence of independent $X _s \sim {\cal
G}(\frac{n-s}{n})$ and $ Y_s = 2 - X_s.$
Moreover we consider

\[ W_s = \sum_{i=1}^s Y_i \]

\[\tau:=\min\{s : W_s\leq 1\}.\]

Observe that on the event $\{i \le V_{\infty}-1\}$ it is possible to 
make a coupling such that $(X_i = {\bar X}_i) = 0 a.s.$ and from this 
we have that $ \rho = \tau - 1.$ So, for what comes next we are interested in the
random variable $\tau.$ We show that $\tau$ has expectation of
order $n$, standard deviation of order $n$ and when re-scaled
properly converges to a normal variable.

Let $\mu_s:=E[Y_s]$, so that
\begin{equation}
\label{mu}\mu_s=2-\frac{1}{1-\frac{s}{n}}.
\end{equation}
Note that up to $s<n/2$ we have $\mu_s>0$. On the other hand for
$s>n/2$ we find $\mu_s<0$. This means that about up to $s=n/2$ the
random map $s\mapsto W_s$ increases and after $s=n/2$ it
decreases.

Let $w_s:=E[W_s]$ and let $W_s^*:=W_s-w_s$, whilst
$Y^*_i=Y_i-E[Y_i]$. By these definitions, we get

\[ W^*_s=Y^*_1+Y_2^*+\dots+Y^*_s. \]

The variables $Y^*_1$, $Y^*_2$, ... are independent. Let $c<1$ be
any constant not depending on $n$. Then for $s\leq cn$ the
variables $Y_i$ with $i\leq s$ are stochastically uniformly
bounded by a geometric variable. Hence, $W^*_s$ is typically of
order $\sqrt{s}$ when $s\leq cn$. On the other hand, $s\rightarrow
w_s$ takes on values  which are of order $n$.  Hence, ``the main
shape'' of $s\rightarrow W_s$ is ``determined'' by $s\rightarrow
w_s$ whilst $W_s^*$ only represents
a smaller fluctuation.\\
We have for $s<n$,
\begin{equation}
\label{wssum}w_s=\sum_{i=1}^s\left(  2-\frac{1}{1-\frac{i}{n}}\right),
\end{equation}
which implies that,

$$
w_s\approx n\int_0^{s/n} 2-\frac{1}{1-x}dx.$$
The integral in the expression on the right side of the above approximation,
is equal
$$2s/n+\ln (1-s/n).$$
The next lemma gives the precision of our approximation for $w_s$.

\begin{lemma}
\label{ws} For all $n$ and all $s<n$, we have:
\begin{equation}
\label{boundwsn}
\left|w_s-n\;\left[2(s/n)+\ln(1-(s/n)\right]\;\right|\leq
3+\frac{1}{1-(s/n)}.
\end{equation}
\end{lemma}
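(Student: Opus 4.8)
The plan is to recognize $w_s$ as a right-endpoint Riemann sum for the integral $n\int_0^{s/n}\bigl(2-\tfrac1{1-x}\bigr)\,dx$ and to control the discretization error by exploiting the monotonicity of the integrand. First I would observe that the affine part contributes no error at all: since $\sum_{i=1}^s 2 = 2s = n\int_0^{s/n} 2\,dx$, setting $g(x)=\tfrac1{1-x}$ and using $n\int_0^{s/n} g = -n\ln(1-s/n)$ gives
$$ w_s - n\Big[2\tfrac{s}{n} + \ln\bigl(1-\tfrac{s}{n}\bigr)\Big] = -\Big(\sum_{i=1}^s g\bigl(\tfrac{i}{n}\bigr) - n\int_0^{s/n} g(x)\,dx\Big). $$
Thus the whole problem reduces to estimating the error of the right-endpoint rule for the single increasing function $g$ on $[0,s/n]$, valid because $s<n$ keeps this interval inside $[0,1)$.

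Next I would write this error as a telescoping-friendly sum over the mesh intervals,
$$ \sum_{i=1}^s g\bigl(\tfrac in\bigr) - n\int_0^{s/n} g(x)\,dx = \sum_{i=1}^s \Big[g\bigl(\tfrac in\bigr) - n\!\int_{(i-1)/n}^{i/n}\! g(x)\,dx\Big]. $$
Since $g$ is increasing on $[0,1)$, on each subinterval $[(i-1)/n,\,i/n]$ the value $g(i/n)$ is the maximum and $g((i-1)/n)$ the minimum, so the average $n\int_{(i-1)/n}^{i/n} g$ lies between them. This yields, term by term, $0 \le g(\tfrac in) - n\int_{(i-1)/n}^{i/n} g \le g(\tfrac in) - g(\tfrac{i-1}{n})$. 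Summing, the upper bounds telescope to $g(s/n)-g(0)=\tfrac1{1-s/n}-1$, so the error is nonnegative and at most $\tfrac1{1-(s/n)}-1$.

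Combining the two displays gives $\bigl|w_s - n[2(s/n)+\ln(1-(s/n))]\bigr| \le \tfrac1{1-(s/n)} - 1$, which is comfortably below the claimed bound $3+\tfrac1{1-(s/n)}$; the slack in the constant means there is even room to use cruder intermediate estimates if convenient. The only place requiring care is the last mesh interval $i=s$, where $g$ is largest and blows up as $s\to n$, but this is precisely absorbed by the $\tfrac1{1-(s/n)}$ term on the right-hand side, so the estimate stays uniform in $s<n$. I expect the monotonicity bookkeeping in the telescoping step to be the only genuinely substantive point; everything else is exact cancellation of the linear term or the standard comparison of a Riemann sum with its integral.
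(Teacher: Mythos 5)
Your proof is correct and takes essentially the same route as the paper: both control the difference between the sum defining $w_s$ and $n\int_0^{s/n}\left(2-\frac{1}{1-x}\right)dx$ by exploiting monotonicity of the integrand, the paper via its generic comparison inequality (\ref{integralapprox}) applied to the full integrand $2-\frac{1}{1-x}$, you via splitting off the affine part (which contributes no error) and telescoping the increasing part $\frac{1}{1-x}$ over the mesh intervals. Your bookkeeping yields the slightly sharper bound $\frac{1}{1-(s/n)}-1$, which of course implies the stated bound $3+\frac{1}{1-(s/n)}$.
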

\begin{proof}
Let $f$ denote a decreasing function on the interval $[a,b]$. Note
that we have
$$\frac{1}{n}\sum_{i=1}^nf\left(a+(b-a)\frac{i}{n}\right)
\leq \int_a^b f(y)dy\leq \frac{1}{n}
\sum_{i=0}^{n-1}f\left(a+(b-a)\frac{i}{n}\right)$$ and hence
\begin{equation}
\label{integralapprox}
\left|\frac{1}{n}\sum_{i=1}^nf\left(a+(b-a)\frac{i}{n}\right) -
\int_a^b f(y)dy \right|\leq \frac{1}{n}(|f(a)|+|f(b)|).
\end{equation}
The last inequality also holds for increasing functions. Note that
the map $x\mapsto 2-\frac{1}{1-x}$ is everywhere monotone on
$[0,1]$. Hence we can apply to it inequality (\ref{integralapprox})
and find
\begin{equation}
\label{integralapprox2} \left|\sum_{i=1}^s\left(
2-\frac{1}{1-\frac{i}{n}}\right)-
n\int_0^{s/n}2-\frac{1}{1-y}dy\right|\leq 3+\frac{1}{1-\frac{s}{n}}.
\end{equation}
The integral in the expression above can be calculated explicitly:
$$\int_0^{s/n}2-\frac{1}{1-y}dy=2(s/n)+\ln(1-(s/n)).$$
Plugging the expression into
 inequality (\ref{integralapprox2}) yields the desired result.
\end{proof}\\[3mm]
We will see that we only need to consider values of $s$ for which
$s\leq cn$ where $c<1$ is a constant not depending on $n$. Hence the
bound on the right side
of (\ref{boundwsn}) can be treated as a constant bound.\\
The main result in this paper is concerned with finding the (random)
zero $\tau$ of the map $W_s$. In the next lemma, we start by
investigating the zeros of the map $p\rightarrow 2p+\ln(1-p)$, which
is our first approximation of $W_s$.

\begin{lemma}
\label{map}
The map
$$p\mapsto 2p+\ln(1-p)\;\;\;;\;\;\;[0,1[\rightarrow
\mathbb{R}$$ has only one zero $q\in ]0,1[$. Furthermore
\begin{equation}
\label{boundmatlab}
0.796<q<0.798
\end{equation}
\end{lemma}
\begin{proof} The derivative of our map is
$2-1/(1-p)$. It is strictly positive for $p\in [0;1/2[$. So our
map $h(p):=2p+\ln (1-p)$ first increases from the value $h(0)=0$
to the positive value $h(1/2)=1-\ln 2>0$. After then the
derivative of $h(p)$ is strictly negative. Since $h(1/2)>0$ and
$h(1)=-\infty$, we infer that there is only one zero of the map
$h(p)$ in $]0;1[$. The bounds (\ref{boundmatlab}) were obtained by
numeric approximation from above and below.
\end{proof}\\[3mm]

Let $r$ be equal to
$$r:=nq.$$ Due to lemma \ref{ws}, we have that
$w_r$ is close to zero up to a constant.  In other words, $r$ is
approximatively equal to the zero of the map $s\mapsto w_s$. By
definition, $W_s$ is equal to $w_s+W^*_s$, where typically $w_s$
takes on values of order $n$ and $W^*_s$ takes on values of order
$\sqrt{n}$. This implies that the zero  of $s\mapsto W_s$ is equal
to the zero of $w_s$ plus/minus a term of order $\sqrt{n}$. Hence,
the stopping time $\tau$ is typically equal to $r=nq$ plus a random
term with standard deviation of
 order $\sqrt{n}$.\\
How big is the standard deviation of $\tau$? For this,
let us quickly look at another, related problem:
assume that the variables
$\tilde{Y}_1,\tilde{Y}_2,\ldots$ are i.i.d. variables with
finite second moment and $E[\tilde{Y}_1]<0$. Let $K>0$ be a large number,
and let $\tilde{\tau}$ be
$$\tilde{\tau}:=\min\{s| K+\tilde{Y}_1+\tilde{Y}_2+\ldots+
\tilde{Y}_s<0\}.$$
We find that $\tilde{\tau}$ takes typically values which are about
equal to $K/|E[\tilde{Y}_1]|$ with a fluctuation of order $\sqrt{K}$.
(The proof is identical to the proof
 of the Law of Large Numbers and  Central Limit
Theorem for Renewal Processes). \\
In our case, the variables $Y_i$ are not i.i.d but only
independent. However, the variables with $s$ close to
$r=nq$ have all about the same distribution that is geometric
with expectation $\mu_r$.
Note that
$$\mu_r=\mu_{qn}=2-\frac{1}{1-q},$$
is a number not depending on $n$.\\
 We saw that
up to a constant factor, $w_r$ is approximately
equal to zero and hence we have $W_r^*\approx W_r$. Assume that
$W_r^*>0$. Then, for $W_s$ to become zero after $s=r$, we need about
$$-\frac{W_r}{\mu_r}\approx -\frac{W_r^*}{\mu_r}$$
additional ``steps'', i.e. additional variables $Y_s$. (The argument
goes like the argument presented above for the variables $\tilde{Y}_i$).
This yields the approximation
\begin{equation}
\label{approxtau}
\tau\approx r-\frac{W_r^*}{\mu_r}=
qn-\frac{Y^*_1+\ldots+Y^*_{qn}}{\mu_r}.
\end{equation}
The above approximation is typically precise up to a term of order
$n^{1/4}$. This will be proven by introducing some events $B^n_0$,
$B_1^n$ and $B_2^n$ and showing that they when they hold (lemma
\ref{lemmacombinatorics}) then the error in the last approximation
above is of order $n^{1/4}$. In the last section, we prove that
the events $B^n_0$, $B^n_1$ and $B^n_2$ have their probabilities
going to one when $n$ goes to infinity. The expression on the
right side of approximation (\ref{approxtau}) gives the asymptotic
behavior of the standard deviation of $\tau$. The reason is that
the term
\begin{equation}
\label{term}\frac{Y^*_1+\ldots+Y^*_{qn}}{\mu_r}
\end{equation}
 has a standard deviation of order $\sqrt{n}$, whilst the error term
of the approximation (\ref{approxtau}) is of order $n^{1/4}$.
Hence, the standard deviation of (\ref{term}) is asymptotically
equal to the standard deviation of $\tau$ up to a much smaller
error term. Let us calculate the variance of the expression
(\ref{term}). We have that the variables $Y^*_i$ are re-centered
geometric variables with parameter $(n-i)/n$. The variance of
$Y^*_i$ is thus
$$\frac{i/n}{(1-i/n)^2} $$
Hence we find that the variance of the sum (\ref{term}) is equal to
\begin{equation}
\label{variance}
\frac{1}{\mu_r^2}\sum_{i=1}^{qn}VAR[Y^*_i]=
\frac{1}{\mu_r^2}\sum_{i=1}^{qn}\frac{i/n}{(1-i/n)^2}
\end{equation}
The sum in the above expression can be approximated by
an integral. This is the content of the next lemma:
\begin{lemma}
\label{lemmavar}
We have for all $n$ and all $q<1$ that
\begin{equation}
\label{inqn}
\left|\sum_{i=1}^{qn}\frac{i/n}{(1-i/n)^2}
-n\;\int_0^q\frac{x}{(1-x)^2}dx
\right|\leq \frac{q}{(1-q)^2}.
\end{equation}
\end{lemma}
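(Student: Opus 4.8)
The plan is to prove Lemma \ref{lemmavar} by reusing, essentially verbatim, the integral-approximation machinery already set up in the proof of Lemma \ref{ws}. The sum on the left of (\ref{inqn}) is a Riemann-type sum for the function $g(x):=x/(1-x)^2$ sampled at the points $i/n$, and the assertion is precisely that this sum differs from $n$ times the integral $\int_0^q g$ by at most the single endpoint value $g(q)$. So the whole argument should be a one-line estimate of the same type as (\ref{integralapprox2}).

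First I would record the monotonicity of $g$ needed to invoke (\ref{integralapprox}). Differentiating gives $g'(x)=(1+x)/(1-x)^3$, which is strictly positive on $[0,1[$, so $g$ is strictly increasing on $[0,q]$. This is exactly the hypothesis required, and the paper has already observed that inequality (\ref{integralapprox}) holds for increasing functions as well as for decreasing ones. Next I would apply this inequality with $f=g$, $a=0$, $b=q$, using the sample points $i/n$ for $i=1,\dots,qn$, so that the spacing is $1/n$, exactly as in the passage leading to (\ref{integralapprox2}). The Riemann sum with spacing $1/n$ then approximates $\int_0^q g$ with an error of at most $(1/n)\,|g(q)-g(0)|$, and multiplying through by $n$ converts $\frac{1}{n}\sum_{i=1}^{qn} g(i/n)$ into $\sum_{i=1}^{qn} g(i/n)$ and the integral into $n\int_0^q g$. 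This yields
$$\left|\sum_{i=1}^{qn}\frac{i/n}{(1-i/n)^2}-n\int_0^q\frac{x}{(1-x)^2}dx\right|\leq |g(0)|+|g(q)|.$$
Since $g(0)=0$ and $g(q)=q/(1-q)^2>0$, the right-hand side collapses to $q/(1-q)^2$, which is precisely the bound claimed in (\ref{inqn}).

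I do not expect any serious obstacle here; this is the same estimate as Lemma \ref{ws}, and in fact slightly cleaner. The only points deserving a word of care are (i) that $g$ is increasing rather than decreasing, so one uses the increasing-function form of (\ref{integralapprox}), and (ii) that $qn$ need not be an integer, so the upper summation limit should be read as $\lfloor qn\rfloor$; neither affects the bound. The pleasant simplification relative to Lemma \ref{ws} is that the boundary contribution $|g(0)|$ vanishes, so the error carries no additive constant and reduces to the single endpoint value $g(q)=q/(1-q)^2$.
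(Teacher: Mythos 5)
Your proposal is correct and follows exactly the paper's own argument: compute $g'(x)=(1+x)/(1-x)^3>0$, invoke the monotone-function integral approximation (\ref{integralapprox}) on $[0,q]$ with spacing $1/n$, and observe that the endpoint contribution reduces to $|g(0)|+|g(q)|=q/(1-q)^2$. Your write-up is in fact slightly more explicit than the paper's (which leaves the endpoint evaluation and the vanishing of $g(0)$ implicit), but it is the same proof.
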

\begin{proof}
Let $h(x):=x/(1-x)^2$. We find that the derivative is equal to
$$h'(x)=\frac{1+x}{(1-x)^3}$$ which is positive for all $x\in[0,1[$.
Hence, inequality (\ref{integralapprox}) can be applied and we find
that inequality (\ref{inqn}) holds.
\end{proof}\\[3mm]
The last lemma above implies that the standard deviation of
(\ref{term}) is approximately equal to $\sigma \sqrt{n}$, where
$$\sigma:=\frac{\sqrt{\int_0^q\frac{x}{(1-x)^2}dx}}{\mu_r}.$$
Note that this is exactly the re-scaling factor used in our main
theorem \ref{maintheorem}!

\section{Combinatorics}
The first event $B^n_0$ is the event that the first $n^{1/4}$
active random walks which jump in, do not get killed:
$$B^n_0:=\{\forall i,j\leq n^{1/4},\; {\rm with}\;i\neq j\;{\rm we\; have},
\;S_i\neq S_j\}.$$ The next event $B^n_1$ is the event that the
approximation of $W_s$ by $w_s$ does not exceed the size $\ln s
\sqrt{s}$. More precisely, $B^n_1$ is the event that for all $s$
with $n^{1/4}\leq s\leq qn$, we have
$$|W^*_s|\leq \ln s \sqrt{s}.$$
The next event $B^n_2$ says that for all $i$ such that
$0\leq i\leq (\ln n)^2\sqrt{n}$ we have
that
$$\left|Y_{qn+1}+Y_{qn+2}+\ldots+Y_{qn+i}-i\mu_r\right|
\leq (\ln n)^3\cdot n^{1/4}$$
and
$$\left|Y_{qn-1}+Y_{qn-2}+\ldots+Y_{qn-i}-i\mu_r\right|
\leq (\ln n)^3\cdot n^{1/4}$$
Next comes our main combinatorial lemma
\begin{lemma}
\label{lemmacombinatorics}
Assume that $B^n_0$, $B^n_1$ and $B^n_2$ all hold,
then we have
$$\left|
\tau- \left(qn-\frac{Y_1^*+Y_2^*+\ldots+Y_{qn}^*}{\mu_r}\right)\right|
\leq 2(\ln n)^3\cdot n^{1/4}
$$
\end{lemma}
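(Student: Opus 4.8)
The plan is to pin down the first-passage level $\tau$ by comparing $W_s$ with its linearization around $s=qn$. Writing $g(x):=2x+\ln(1-x)$, Lemma~\ref{ws} gives $w_s=n\,g(s/n)+O(1)$, and since $g(q)=0$ this yields $w_{qn}=O(1)$, so that $W_{qn}=W_{qn}^*+O(1)$. I record the two arithmetic facts about $\mu_r=g'(q)=2-\frac1{1-q}$ that will drive everything: from $q\in(0.796,0.798)$ one has $\mu_r<0$ and, crucially, $|\mu_r|>1$. The target quantity is
\[
\hat\tau:=qn-\frac{W_{qn}^*}{\mu_r},
\]
and the goal is to show $|\tau-\hat\tau|\le \tfrac32(\ln n)^3n^{1/4}$, which is stronger than the stated bound. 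A first observation, from $B_1^n$ at $s=qn$, is that $|W_{qn}^*|\le(\ln n)\sqrt n$, so $|\hat\tau-qn|\le(\ln n)\sqrt n/|\mu_r|$ sits comfortably inside the window $|s-qn|\le(\ln n)^2\sqrt n$ on which $B_2^n$ is effective.

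The second step is to linearize. On $B_2^n$, for every $s$ with $|s-qn|\le(\ln n)^2\sqrt n$ the increment bound reads $|W_s-W_{qn}-(s-qn)\mu_r|\le(\ln n)^3n^{1/4}$. Substituting $W_{qn}=W_{qn}^*+O(1)$ and parametrising $s=\hat\tau+m$ (so that $W_{qn}^*+(\hat\tau-qn)\mu_r=0$ by the definition of $\hat\tau$), everything collapses to
\[
W_{\hat\tau+m}=m\,\mu_r+\eta_m,\qquad |\eta_m|\le \tfrac32(\ln n)^3n^{1/4}.
\]
Because $|\mu_r|>1$, taking $m=+\lceil\tfrac32(\ln n)^3n^{1/4}\rceil$ makes $m\mu_r$ overwhelm $\eta_m$ and forces $W_{\hat\tau+m}<1$, giving the upper bound $\tau\le\hat\tau+\tfrac32(\ln n)^3n^{1/4}$; taking $m\le-\lceil\tfrac32(\ln n)^3n^{1/4}\rceil$ forces $W_{\hat\tau+m}>1$, so no crossing occurs in the window for such $s$.

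The hard part, and the main obstacle, is that $\tau$ is a global first-passage time, so the lower bound $\tau\ge\hat\tau-\tfrac32(\ln n)^3n^{1/4}$ demands $W_s>1$ for every $s$ below that threshold, not just for $s$ inside the $B_2^n$-window. Outside the window the increments are not controlled, and I must instead let the deterministic drift $w_s$ dominate the fluctuation $W_s^*$. For $s\le n^{1/4}$ I would use $B_0^n$: no active walk is killed, every $Y_s=1$, so $W_s=s$, which exceeds the threshold for $2\le s\le n^{1/4}$. For $n^{1/4}\le s\le qn-(\ln n)^2\sqrt n$ I would combine $B_1^n$, which bounds $|W_s^*|\le\ln s\sqrt s$, with a quantitative positivity of the drift: concavity of $g$ on $[0,q]$ with $g(0)=g(q)=0$ gives, via the chords through the midpoint, $g(x)\ge c\,\min(x,q-x)$ for a constant $c>0$, hence $w_s\ge c\,\min(s,qn-s)+O(1)$. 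In each sub-regime this lower bound dominates $\ln s\sqrt s$ --- near $0$ because $\ln s\sqrt s=o(s)$ already at $s=n^{1/4}$, and near $qn$ because $qn-s\ge(\ln n)^2\sqrt n$ beats $(\ln n)\sqrt n$ --- so $W_s>1$ throughout. Since the $B_1^n$-range and the $B_2^n$-window together cover all $s\le qn+(\ln n)^2\sqrt n$, and the threshold $\hat\tau-\tfrac32(\ln n)^3n^{1/4}$ lies inside this set, I obtain $W_s>1$ for all $s$ below it, i.e. $\tau\ge\hat\tau-\tfrac32(\ln n)^3n^{1/4}$. Combined with the upper bound this yields $|\tau-\hat\tau|\le\tfrac32(\ln n)^3n^{1/4}\le2(\ln n)^3n^{1/4}$. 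The only genuinely delicate point is this global control, where the interplay between the concavity-driven lower bound on $w_s$ and the $B_1^n$ fluctuation bound must be checked uniformly across the three sub-regimes; the rest is bookkeeping of constants, for which $|\mu_r|>1$ leaves ample room.
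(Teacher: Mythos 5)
Your proof is correct and follows essentially the same route as the paper: rule out $s\le n^{1/4}$ via $B_0^n$, rule out $s\in[n^{1/4},\,qn-(\ln n)^2\sqrt n]$ by combining $B_1^n$ with the drift domination $w_s>\ln s\sqrt s$, and then locate $\tau$ inside the $B_2^n$-window through the linearization $W_s\approx W_{qn}+(s-qn)\mu_r$, using $|w_{qn}|=O(1)$ (Lemma~\ref{ws} plus $h(q)=0$) and the size of $|\mu_r|$ to pass from $qn-W_{qn}/\mu_r$ to $qn-W^*_{qn}/\mu_r$ with the stated error. The only divergence is that the drift-domination fact, which the paper isolates as Lemma~\ref{lemmawbigger} and proves by a three-interval case analysis with the mean value theorem, you re-derive via concavity of $x\mapsto 2x+\ln(1-x)$ through the chord bound $g(x)\ge c\min(x,q-x)$ on $[0,q]$ --- a cleaner but equivalent argument for the same auxiliary estimate.
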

\begin{proof}
First, note that when $B^n_0$ holds, then $\tau $ is not in the
interval $[0,n^{1/4}]$. Second, according to lemma
\ref{lemmawbigger}, we have for all $s$ contained in the interval
\begin{equation}
\label{interval}[n^{1/4},nq-(\ln n)^2\sqrt{n}]
\end{equation}
that
$w_s>\ln s\sqrt{s}$. Hence, when $B^n_1$ holds, and
since by definition $W_s=W^*_s+w_s$, we get that
$\tau$ is not in the interval (\ref{interval}).\\
Let $s\mapsto f(s)$ be the (random) linear map
$$f(s)=W_{qn}+(s-qn)\mu_r.$$
Note that the map $f$ has a zero
at
$$qn-\frac{W_{qn}}{\mu_r}.$$
(Note that $\mu_r$ is negative.)
Let $f^+$, resp. $f^-$ be the linear map
$f+(\ln n)^3\cdot n^{1/4}$, resp.
$f-(\ln n)^3\cdot n^{1/4}$. The zero of
$f^+$, resp. $f^-$ is at
$$qn-\frac{W_{qn}+(\ln n)^3\cdot n^{1/4}}{\mu_r},$$
resp.
$$qn-\frac{W_{qn}-(\ln n)^3\cdot n^{1/4}}{\mu_r}$$
Let $I$ be the interval
$$I:=[qn-(\ln n)^2\sqrt{n},qn+(\ln n)^2\sqrt{n}]$$
and let $J$ be the interval
$$J:=[qn-\frac{W_{qn}-(\ln n)^3\cdot n^{1/4}}{\mu_r},
qn-\frac{W_{qn}+(\ln n)^3\cdot n^{1/4}}{\mu_r}].$$
Note that when the event
$B^n_1$ holds, then
\begin{equation}
\label{1}
|W^*_{qn}|\leq \ln n \sqrt{n}.
\end{equation}
By definition
\begin{equation}
\label{2}
W_{qn}=W^*_{qn}+w_{qn}.
\end{equation}
But by equality (\ref{wssum}) and by lemma \ref{ws}, we have for the
constant $k:=3+1/(1-q)$
\begin{equation}
\label{wqn}\left|w_{qn}-n\int_0^{q} 2-\frac{1}{1-x}dx\right|\leq k.
\end{equation}
By definition of $q$, we have
$$\int_0^{q} 2-\frac{1}{1-x}dx=0,$$
so that with inequality (\ref{wqn}), we obtain
\begin{equation}
\label{wqnleqk}
|w_{qn}|\leq k.
\end{equation}
The last inequality together with (\ref{1}) and (\ref{2}) implies
\begin{equation}
\label{Wqn}
|W_{qn}|\leq k+\ln n \sqrt{n}.
\end{equation}
Using inequality (\ref{Wqn}), we obtain that for $n$ large enough
$$J\subset I.$$
Now, when the event $B^n_2$ holds, then in the interval $I$ we have
that $W_s$ is between $f^-$ and $f^+$, that is $f^-(s)\leq W_s\leq
f^+(s)$ for $s\in I$. Hence, in the interval $I$, the map $s\mapsto
W_s$ has its zero between the zeros of $f^-$ and $f^+$. More
precisely, this means that $W_s$ has a zero somewhere in the
interval $I$ and furthermore we have that all zeros of $W_s$ in the
interval $I$ are located in
$J$.\\
We can now summarize what we found so far: when $B^n_0$, $B^n_1$ and
$B^2_n$ all hold, then the map $s\mapsto W_s$ has no zero before the
interval $I$, but within $I$ all the zeros are located in the
subinterval $J$. Hence, $\tau\in J$ which implies
$$\left|\tau-\left(qn-\frac{W_{qn}}{\mu_r}\right)\right|\leq
-(\ln n)^3\cdot n^{1/4}/\mu_r
$$
Using the last equation together with (\ref{wqnleqk}) and (\ref{2}),
we find
$$\left|\tau-\left(qn-\frac{W^*_{qn}}{\mu_r}\right)\right|\leq
-(\ln n)^3\cdot n^{1/4}/\mu_r+k
$$
Note that for $n$ large enough, the right side of the last
inequality is smaller than $2(\ln n)^3\cdot n^{1/4}$. This finishes
proving our lemma
\end{proof}

\begin{lemma}
\label{lemmawbigger} For all $n$ large enough:
every $s$ contained in the interval
\begin{equation}
%\label{interval}
[\;n^{1/4}\;,\;nq-(\ln n)^2\sqrt{n}\;]
\end{equation}satisfies
\begin{equation}
\label{ws>}w_s>\ln s\sqrt{s}.
\end{equation}
\end{lemma}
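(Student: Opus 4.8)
The plan is to reduce the claim about $w_s$ to a purely deterministic inequality about the function $h(p):=2p+\ln(1-p)$, and then to bound $h$ from below on two separate sub-ranges of $p=s/n$. First I would invoke Lemma~\ref{ws}: on the interval in question we have $s/n\le q<0.798$, so $1/(1-s/n)\le 1/(1-q)$ and the error there is dominated by the constant $k:=3+1/(1-q)$. Hence $w_s\ge n\,h(s/n)-k$, and it suffices to prove that
\[
n\,h(s/n)>\ln s\,\sqrt{s}+k
\]
for every $s\in[\,n^{1/4},\,nq-(\ln n)^2\sqrt n\,]$ and all large $n$. Since $h$ vanishes on $[0,1[$ only at $0$ and $q$ and is positive in between (Lemma~\ref{map}), the real content is a quantitative lower bound on $h$ combined with a comparison of orders.

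For the lower range $n^{1/4}\le s\le n/2$ I would use the series $h(p)=p-\sum_{k\ge2}p^k/k$, which for $p\le 1/2$ gives $h(p)\ge p-\tfrac{1}{2}\tfrac{p^2}{1-p}\ge p-p^2=p(1-p)\ge p/2$. Thus $n\,h(s/n)\ge s/2$, and the required inequality becomes $\sqrt s/2>\ln s+k/\sqrt s$. Since $s\ge n^{1/4}$ forces $\sqrt s\ge n^{1/8}$, which grows far faster than $\ln s\le\ln n$, this holds for all large $n$; as $\sqrt s/2-\ln s-k/\sqrt s$ is increasing for $s>16$, I can reduce the whole range to its left endpoint, where the margin is of order $n^{1/8}$.

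For the upper range $n/2\le s\le nq-(\ln n)^2\sqrt n$ I would exploit concavity: $h''(p)=-1/(1-p)^2<0$, so on $[1/2,q]$ the graph of $h$ lies above the chord joining $(1/2,h(1/2))$ and $(q,0)$, giving $h(p)\ge c_1(q-p)$ with $c_1:=(1-\ln2)/(q-1/2)>0$. Here $q-p\ge(\ln n)^2/\sqrt n$, so $n\,h(s/n)\ge c_1(\ln n)^2\sqrt n$, whereas $\ln s\,\sqrt s\le\sqrt q\,(\ln n)\sqrt n$; the ratio of the two sides is at least $(c_1/\sqrt q)\ln n\to\infty$, which absorbs the constant $k$ and closes the estimate.

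The one genuinely delicate point is the right endpoint, where $p$ approaches $q$ and $h(p)\to0$. The buffer $(\ln n)^2\sqrt n$ between $s$ and $nq$ is exactly calibrated so that $q-s/n$ stays of order $(\ln n)^2/\sqrt n$, keeping $n\,h(s/n)$ of order $(\ln n)^2\sqrt n$, a full factor of $\ln n$ above the target $\ln s\,\sqrt s\sim(\ln n)\sqrt n$. Matching these orders is where the precise form of the interval is used; the behaviour near $0$ and the crossing at $s=n/2$ are comparatively routine, and in both regimes the additive constant $k$ is negligible against margins of order $n^{1/8}$ or $(\ln n)^2\sqrt n$ respectively.
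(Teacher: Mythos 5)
Your proof is correct and follows essentially the same route as the paper's: reduce everything to a deterministic lower bound on $h(p)=2p+\ln(1-p)$ via Lemma~\ref{ws}, split the range of $s$, obtain a linear-in-$p$ bound near the origin and a linear-in-$(q-p)$ bound near $q$, and use the $(\ln n)^2\sqrt{n}$ buffer so that the right end beats $\ln s\sqrt{s}$ by a factor of order $\ln n$. The differences are only in execution: you use two intervals where the paper uses three, a power-series estimate in place of the mean value theorem near $0$, and a concavity/chord bound in place of the mean value theorem near $q$.
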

\begin{proof}
We consider the three intervals $I_1=[n^{1/4},n/3]$,
$I_2:=[n/3,n/2]$ and $I_3:=[n/2,nq-(\ln n)^2\sqrt{n}]$. We are going
to prove that inequality (\ref{ws>}) holds for each one of them. Let
$h$ designate the map $h(x):=2x+\ln(1-x)$. Note that the second
derivative of $h$ is negative everywhere on $I_1$ for $x=s/n$.
Hence, $h'(x)\geq h'(1/3)>0$ for all $x\in [0,1/3]$. Since $h(0)=0$,
the mean value theorem implies that for all $x\in[0,1/3]$, we have
$h(x)\geq x\cdot h'(1/3)$. When $s\in I_1$ then $(s/n)\in[0,1/3]$ so
that
\begin{equation}
\label{hsn} h(s/n)\geq (s/n)\cdot h'(1/3).
\end{equation}
According to inequality (\ref{boundwsn}), we have
$$w_s\geq n h(s/n)-(3+1/(1-(s/n)))$$
and for $s\in I_1$ since $(s/n)\leq 1/3$,
we obtain
$$w_s\geq n h(s/n)-4.5.$$
The last inequality above together with inequality (\ref{hsn}) then
implies
\begin{equation}
\label{heini}w_s\geq s\cdot h'(1/3)-4.5.
\end{equation}
The expression on the right side of the last inequality
above is larger than $\ln s\sqrt{s}$ for $s$ large enough.
However for $s\in I_1$, we have $s\geq n^{1/4}$, so that
for $n$ large enough, $s$ will be large enough and
$$s\cdot h'(1/3)-4.5\geq \ln s \sqrt{s}.$$
From the last inequality above and (\ref{heini}), we have that
inequality (\ref{ws>}) follows.\\
Next we need to prove (\ref{ws>}) for $s$ in $I_2$. Using inequality
(\ref{boundwsn}) together with the fact that $s/n\leq 1/2$ for $s\in
I_2$, we find
\begin{equation}
\label{heini2}w_s\geq n h(s/n)-5.
\end{equation}
When $s\in I_2$ we have that $s/n\in [1/3,1/2]$. But on the interval
$[1/3,1/2]$ the map $h$ is everywhere increasing. Hence for $s\in
I_2$, we have that $h(s/n)\geq h(1/3)$. Plugging the last inequality
into (\ref{heini2}) gives
\begin{equation}
\label{heini3}w_s\geq n h(1/3)-5.
\end{equation}
For $s\in I_2$ we have $s\leq n$. Hence
\begin{equation}
\label{heini4}\ln s\sqrt{s}\leq \ln n\sqrt{n}.
\end{equation}
For $n$ large enough, $\ln n \sqrt{n}$ is less than $n h(1/3)-5$.
From this and inequalities (\ref{heini3}) and
(\ref{heini4}) inequality (\ref{ws>}) follows.\\
Now, it only remains to prove inequality (\ref{ws>}) for $s\in I_3$.
When $s\in I_3$ we have that $s/n\leq q<1$. This together with
inequality (\ref{boundwsn}) yields
\begin{equation}
\label{heini5}w_s\geq n h(s/n)-(3+1/(1-q)).
\end{equation}
When $s\in I_3$, we have that

$$s/n\in\left[0.5,q-\frac{(\ln n)^2}{\sqrt{n}}\right].$$
On the interval on the right side of the last inclusion above the
map $h$ is everywhere decreasing. Hence, for $s/n\in I_3$ we have
\begin{equation}
\label{q}h(s/n)\geq h\left(q-\frac{(\ln n)^2}{\sqrt{n}}\right).
\end{equation}
Note that by definition $h(q)=0$. Furthermore, $h'(q)<0$. Hence,
using the mean value theorem applied to (\ref{q}), we obtain that
for all $n$ large enough
\begin{equation}
\label{q2}h(s/n)\geq -h'(q)\frac{(\ln n)^2}{2\sqrt{n}}.
\end{equation}
The last inequality together with (\ref{heini5}), gives
\begin{equation}
\label{heini7}w_s\geq -h'(q)\sqrt{n}(\ln n)^2+(3-1/(1-q)).
\end{equation}
For $n$ large enough, the right side of the last inequality above is
larger than $\ln n\sqrt n$ which is larger than $\ln s\sqrt{s}$ when
$s\in I_3$. Hence inequality (\ref{ws>}) holds.
\end{proof}

\section{Probabilities}

\begin{lemma}
\label{4} We have that $P(B^n_0)\rightarrow 1$ as $n\rightarrow
\infty$.
\end{lemma}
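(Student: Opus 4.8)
The plan is to recognize $B^n_0$ as a ``no-collision'' (birthday-problem) event for the independent uniform samples $S_1,S_2,\ldots$ on the $n$ vertices, and to control the probability of its complement by a crude union bound over pairs. Write $m=\lfloor n^{1/4}\rfloor$. By the very definition of $B^n_0$, this event fails exactly when $S_i=S_j$ for some pair with $1\le i<j\le m$, so it suffices to show that such a collision is unlikely.

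First I would apply the union bound,
$$P\big((B^n_0)^c\big)\le \sum_{1\le i<j\le m}P(S_i=S_j).$$
Since the $S_t$ are independent and uniform on $\{1,\ldots,n\}$, each summand equals $P(S_i=S_j)=\sum_{v=1}^{n}P(S_i=v)P(S_j=v)=n\cdot(1/n)^2=1/n$, whence
$$P\big((B^n_0)^c\big)\le \binom{m}{2}\frac{1}{n}\le \frac{m^2}{2n}.$$

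Then I would substitute $m=\lfloor n^{1/4}\rfloor\le n^{1/4}$, so that $m^2\le n^{1/2}$ and
$$P\big((B^n_0)^c\big)\le \frac{n^{1/2}}{2n}=\frac{1}{2\sqrt n},$$
which tends to $0$ as $n\to\infty$. Consequently $P(B^n_0)=1-P\big((B^n_0)^c\big)\to 1$, as claimed.

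There is no genuine obstacle here: the only point worth noting is that the window length $n^{1/4}$ sits comfortably below the birthday threshold $\sqrt n$ at which collisions start to become likely, and indeed $n^{1/4}=o(n^{1/2})$ leaves a full polynomial margin, yielding the explicit rate $1-O(n^{-1/2})$. If one prefers an exact expression to the union bound, one can instead use independence directly to write $P(B^n_0)=\prod_{k=1}^{m-1}(1-k/n)$ and apply the elementary inequality $\ln(1-x)\ge -x-x^2$ (valid for $x\in[0,1/2]$, hence for all $k/n$ once $n$ is large) to obtain $\ln P(B^n_0)\ge -\sum_{k=1}^{m-1}(k/n+k^2/n^2)=-O(m^2/n)\to 0$; both routes give the same conclusion with the same order of approximation.
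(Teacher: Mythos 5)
Your proof is correct and is essentially the same argument as the paper's: both are crude union bounds on collision probabilities among the i.i.d.\ uniform choices $S_1,\ldots,S_{\lfloor n^{1/4}\rfloor}$, yielding $P\big((B^n_0)^c\big)=O(n^{-1/2})$. The only cosmetic difference is that you sum $1/n$ over pairs $(i,j)$, while the paper sums over frogs $i$ after bounding each frog's death probability by $n^{1/4}/n$ (itself an implicit union bound over the previously visited vertices), so the two computations coincide.
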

\begin{proof}Let $B^n_{0i}$ be the event that the $i$-th frog
jumping in does not die. Hence, $B^n_{0i}$ is the event that $NS_i
\neq S_t$ for all $t<i$. For an event $A^n$ we designate by
$A^{nc}$ its complement. We have that
$$B^n_{0}=\bigcap_{i=1}^{n^{1/4}}B^n_{0i}$$
and hence
\begin{equation}
\label{PBnc}
P(B^{nc}_0)\leq \sum_{i=1}^{n^{1/4}}P(B^{nc}_{0i}).
\end{equation}
Now, for $i\leq n^{1/4}$ there are no more than $n^{1/4}$ vertices
and hence the probability for the $i$-th jumping frog to die is
not more than $n^{1/4}/n=n^{-3/4}$. This immediately implies that
$$P(B^{nc}_{0i})\leq \frac{1}{n^{3/4}}.$$
Using the last inequality with inequality (\ref{PBnc}), we find
$$P(B^{nc}_0)\leq \frac{n^{1/4}}{n^{3/4}}=\frac{1}{\sqrt{n}}.$$
This finishes to prove our lemma.
\end{proof}\\[3mm]

\begin{lemma}
There exist two constants $\kappa>0$ and $c>0$ such that for every
geometric variable $X$ with parameter $p$ satisfying

\begin{equation}
\label{conditiononp}
p\in[1-q,1]
\end{equation} and every  $\Delta\in[0,c]$, we have
\begin{equation}
\label{largedeviationforY}
E[e^{(X-(1/p)- \Delta)\cdot \kappa\Delta}]
\leq e^{-0.5\Delta^2\kappa}
\end{equation}
\end{lemma}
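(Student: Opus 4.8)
The plan is to recognize (\ref{largedeviationforY}) as a Chernoff-type (exponential Markov) estimate and to reduce it to a uniform bound on the second derivative of the cumulant generating function of a geometric law. Recall that for $X\sim\mathcal{G}(p)$ with values in $\{1,2,\dots\}$ one has $E[e^{tX}]=pe^t/(1-(1-p)e^t)$, valid for $t<-\ln(1-p)$, with $E[X]=1/p$. I would set $t:=\kappa\Delta$ and take logarithms in (\ref{largedeviationforY}); using $t\Delta=\kappa\Delta^2=t^2/\kappa$ and collecting terms, the asserted inequality becomes equivalent to
\[ \Lambda(t)-\frac{t}{p}\leq \frac{t^2}{2\kappa},\qquad t=\kappa\Delta, \]
where $\Lambda(t):=\ln E[e^{tX}]=\ln p+t-\ln\!\left(1-(1-p)e^t\right)$ is the cumulant generating function. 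Once the algebra is carried out this display is the entire content of the lemma.

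Next I would exploit that $\Lambda(0)=0$ and $\Lambda'(0)=E[X]=1/p$, so that the left-hand side above is exactly the second-order Taylor remainder of $\Lambda$ at the origin. Computing
\[ \Lambda'(t)=\frac{1}{1-(1-p)e^t},\qquad \Lambda''(t)=\frac{(1-p)e^t}{\left(1-(1-p)e^t\right)^2}, \]
Taylor's theorem with Lagrange remainder gives $\Lambda(t)-t/p=\tfrac12\Lambda''(\xi)t^2$ for some $\xi\in(0,t)$. Hence the target reduces to the single requirement $\Lambda''(\xi)\le 1/\kappa$, and everything comes down to a uniform upper bound on $\Lambda''$ over the admissible range of $\xi$ and $p$.

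For that uniform bound I would observe that $\Lambda''(t)=g\big((1-p)e^t\big)$, where $g(u):=u/(1-u)^2$ is strictly increasing on $[0,1[$ (its derivative $(1+u)/(1-u)^3$ is exactly the quantity computed in the proof of Lemma~\ref{lemmavar}). Under the hypothesis $p\in[1-q,1]$ we have $1-p\le q$, so the argument obeys $(1-p)e^t\le q\,e^t$. The constants should then be fixed in this order: choose any $M$ with $0<M<\ln(1/q)$, so that $qe^{M}<1$ and the MGF exists throughout; set $C:=g(qe^{M})<\infty$; and finally put $\kappa:=1/C$ and $c:=MC$. With these choices, whenever $\Delta\in[0,c]$ and $\xi\in(0,\kappa\Delta]\subseteq(0,M]$ we get $(1-p)e^{\xi}\le qe^{M}$, hence $\Lambda''(\xi)\le g(qe^{M})=C=1/\kappa$, which closes the argument.

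The one delicate point, and the main obstacle, is to keep the choice of $(\kappa,c)$ non-circular: the bound $C$ on $\Lambda''$ depends on the product $\kappa c$ rather than on $\kappa$ and $c$ separately, so I must fix that product $M$ first (strictly below $\ln(1/q)$ to remain inside the domain of the MGF, the binding case being $p=1-q$), and only then read off $\kappa$ and $c$. Once this ordering is respected the estimate is uniform in $p$, since the supremum of $\Lambda''$ over the relevant range is attained at the boundary values $p=1-q$ and $\xi=M$, and no further case analysis is required.
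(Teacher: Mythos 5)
Your proof is correct, and it takes a genuinely different route from the paper's. The paper works with the moment generating function in multiplicative form: it computes $E[e^{(X-(1/p)-\Delta)\kappa\Delta}]$ explicitly as a ratio, controls numerator and denominator by hand via the elementary estimates $e^s\geq 1+s+s^2/2$, $e^s\leq 1+s+1.1s^2/2$ and $\ln(1-s)\geq -1.1s$ (the latter two only for small $s$, which is where its thresholds $c_1$ and $c_2$ come from), and fixes $\kappa$ in advance by the explicit formula $\kappa=\min_{p\in[1-q,1]}\,p^3/\bigl(1.1(1-p)(0.1+p)\bigr)$, tuned so the final exponent lands below $-0.5\Delta^2\kappa$. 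You instead pass to the cumulant generating function, observe that the claim is exactly $\Lambda(t)-t/p\leq t^2/(2\kappa)$ at $t=\kappa\Delta$, and dispose of it with one application of Taylor--Lagrange plus the monotonicity of $u\mapsto u/(1-u)^2$; the uniform bound $\Lambda''(\xi)\leq g(qe^{M})$ over $p\in[1-q,1]$ and $\xi\leq M<\ln(1/q)$ then dictates $\kappa$ and $c$ a posteriori. Your ordering of the constants (fix the product $M=\kappa c$ strictly below $\ln(1/q)$ first, then read off $\kappa=1/C$ and $c=MC$) correctly breaks the apparent circularity and is the structural counterpart of the paper's pair $(c_1,c_2)$. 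As for what each buys: the paper's argument is self-contained at the level of power-series inequalities and yields explicit (if ad hoc) constants; yours is shorter and conceptually transparent, since it identifies the left-hand side as the second-order Taylor remainder of $\Lambda$, it gives uniformity in $p$ for free, and it adapts immediately to the mirror bound (\ref{largedeviationforYII}) for $-X$ (there $\Lambda''(-\xi)=g\bigl((1-p)e^{-\xi}\bigr)\leq g(q)$ for all $\xi\geq 0$), a statement the paper asserts without proof.
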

\begin{proof}
Let $\kappa$ be equal to
\begin{equation}
\label{kappa}\kappa:= \min\frac{p^3}{1.1(1-p)(0.1+p)},
\end{equation}
where the minimum is taken over all $p\in[1-q,1]$. Note that $p$ is
bounded away from zero and
$\kappa>0$.\\
Let $c_1>0$ be a number such that for all  $\Delta\in[0,c_1]$ we have

\begin{equation}
\label{conditionI}
\ln \left( 1-\frac{\Delta^2\kappa^2(1-p)(0.1+p)}{2p^3}\right)
\geq -1.1\frac{\Delta^2\kappa^2(1-p)(0.1+p)}{2p^3}.
\end{equation}
Such a number $c_1>0$ exists since for all
$s>0$ small enough we have $\ln(1-s)\geq -1.1s$ and
since $\kappa^2(1-p)(0.1+p)/(2p^3)$ admits a uniform finite upper bound
for  $p\in[1-q,1]$.\\
Let $c_2>0$ be a number such that for all $\Delta\in [0,c_2]$
we have
\begin{equation}
\label{conditionII}
e^{\Delta\kappa/p}\leq 1+\Delta \kappa/p+1.1\Delta^2\kappa^2/(2p^2).
\end{equation}
Such a number $c_2>0$ exists since for all $s>0$ small enough
we have $e^s\leq 1+s+1.1s^2/2$.
Let $c=\min\{c_1,c_2\}$. Hence when $\Delta\in[0,c]$ we have that both
conditions (\ref{conditionI}) and (\ref{conditionII}) are satisfied.\\
Now for the geometric variable $X$ with parameter $p$ we have that
$$E[e^{(X-(1/p)-\Delta)t}]
=\sum_{m=1}^{\infty}e^{mt-t/p-\Delta t}(1-p)^{m-1}p.$$
Using the formula $\sum_{m=1}^{\infty}a^m=a/(1-a)$, we find that for $t$ small enough
$$E[e^{(X-(1/p)-\Delta)t}]=pe^{-t/p-t\Delta}\frac{e^t}{1-(1-p)e^t}=
\frac{pe^{t(1-(1/p)-\Delta)}}{1-e^t(1-p)}.$$
For $t=\kappa \Delta$, we obtain
\begin{equation}
\label{EeX}
E[e^{(X-(1/p)-\Delta)\Delta\kappa}]=
\frac{pe^{\Delta\kappa(1-(1/p)-\Delta)}}{1-e^{\Delta\kappa}(1-p)}=
\frac{pe^{-\Delta^2\kappa}}
{e^{\Delta\kappa(1-p)/p}-(1-p)e^{\Delta\kappa/p}}.
\end{equation}
Note that for any $s>0$ we have
$e^s\geq 1+s+s^2/2$. Hence for $s=\Delta\kappa(1-p)/p$ we find
\begin{equation}
\label{esg}
e^{\Delta\kappa(1-p)/p}\geq 1+\Delta\kappa(1-p)/p+
\Delta^2\kappa^2(1-p)^2/2p^2
\end{equation}

Applying  inequalities (\ref{esg}) and (\ref{conditionII}) to the
expression on the right side of inequality (\ref{EeX}), we find
\begin{align*}
\label{EeXB}
E[&e^{(X-(1/p)-\Delta)\Delta\kappa}]\\
&\leq\frac{ pe^{-\Delta^2\kappa}}
{1+\frac{\Delta\kappa(1-p)}{p}+
\frac{\Delta^2\kappa^2(1-p)^2}{2p^2}
-(1-p)-
\frac{(1-p)\Delta\kappa}{p}-\frac{1.1(1-p)\Delta^2\kappa^2}{2p^2}}\\
&=\frac{ pe^{-\Delta^2\kappa}}
{p+
\frac{\Delta^2\kappa^2(1-p)^2}{2p^2}
-\frac{1.1(1-p)\Delta^2\kappa^2}{2p^2}}\\
&=\frac{ e^{-\Delta^2\kappa}}
{1+
\frac{\Delta^2\kappa^2(1-p)^2}{2p^3}
-\frac{1.1(1-p)\Delta^2\kappa^2}{2p^3}}\\
&=\exp(-\Delta^2\kappa)\cdot\exp(-\ln
(1-\Delta^2\kappa^2(1-p)(0.1+p)/2p^3))
\end{align*}
Applying inequality (\ref{conditionI}) to the most right expression
in the last chain of inequalities above we find
\begin{align}
E[e^{(X-(1/p)-\Delta)\Delta\kappa}]
&\leq
e^{-\Delta^2\kappa}\cdot e^{1.1\Delta^2\kappa^2(1-p)(0.1+p)/2p^3}\\
&\leq
e^{-\Delta^2\kappa(1-1.1\kappa(1-p)(0.1+p)/2p^3)}.\label{EeX1p2}
\end{align}
By the definition (\ref{kappa}) of $\kappa$, we have
$$\kappa\leq \frac{p^3}{1.1(1-p)(0.1+p)}$$
and hence
$$1-\frac{1.1\kappa(1-p)(0.1+p)}{2p^3}\geq 0.5.$$
The last inequality above applied to (\ref{EeX1p2}) yields
$$E[e^{(X-(1/p)-\Delta)\Delta\kappa}]\leq
e^{-0.5\Delta^2\kappa}.$$
\end{proof}\\[3mm]
We can prove the same type of inequality as the one in the lemma
above for the variable $-X$. Hence, we assume that there exist $c>0$
and $\kappa>0$ such that for all $p\in[1-q,1]$ we have that
condition (\ref{largedeviationforY}) is satisfied as well as
\begin{equation}
\label{largedeviationforYII}
E[e^{(-X+(1/p)- \Delta)\cdot \kappa\Delta}]
\leq e^{-0.5\Delta^2\kappa}
\end{equation}
where again $X$ is a geometric variable with parameter $p$.
\begin{lemma}
\label{5} We have that $P(B^n_1)\rightarrow 1$ as $n\rightarrow
\infty$.
\end{lemma}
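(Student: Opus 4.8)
The plan is to bound $P\big((B^n_1)^c\big)$ by a union bound over the admissible range $n^{1/4}\leq s\leq qn$, controlling each individual deviation probability $P(|W^*_s|>\ln s\sqrt s)$ with an exponential Markov (Chernoff) bound built from the large deviation estimates (\ref{largedeviationforY}) and (\ref{largedeviationforYII}) just established. The key structural observation is that $W^*_s=\sum_{i=1}^s Y^*_i=-\sum_{i=1}^s(X_i-1/p_i)$ is a sum of \emph{independent} centered variables, where $X_i$ is geometric with parameter $p_i=(n-i)/n$. For $i\leq s\leq qn$ we have $p_i=1-i/n\geq 1-q$, so every $p_i$ lies in the interval $[1-q,1]$ on which (\ref{largedeviationforY}) and (\ref{largedeviationforYII}) apply with the \emph{same} constants $\kappa$ and $c$. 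This is precisely why the event $B^n_1$ is only required to hold for $s\leq qn$.

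First I would rewrite the two large deviation inequalities as a two-sided sub-Gaussian moment bound. Substituting $\Delta=\lambda/\kappa$ in (\ref{largedeviationforY}) and (\ref{largedeviationforYII}) gives, for each $i$ and every $0\leq\lambda\leq c\kappa$,
$$E\!\left[e^{\pm\lambda(X_i-1/p_i)}\right]\leq e^{\lambda^2/(2\kappa)}.$$
By independence, $E[e^{\pm\lambda W^*_s}]\leq e^{s\lambda^2/(2\kappa)}$ in the same range of $\lambda$. A Chernoff bound then yields, for $u>0$,
$$P(|W^*_s|>u)\leq 2\exp\!\left(-\lambda u+\tfrac{s\lambda^2}{2\kappa}\right),$$
and the unconstrained optimal choice $\lambda=u\kappa/s$ gives $P(|W^*_s|>u)\leq 2e^{-u^2\kappa/(2s)}$, valid provided $u\kappa/s\leq c\kappa$, i.e. $u\leq cs$. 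Taking $u=\ln s\sqrt s$ produces the clean bound
$$P\!\left(|W^*_s|>\ln s\sqrt s\right)\leq 2e^{-\kappa(\ln s)^2/2}.$$

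Finally I would sum these bounds over $n^{1/4}\leq s\leq qn$. Since $s\geq n^{1/4}$ forces $\ln s\geq\tfrac14\ln n$, each summand is at most $2e^{-\kappa(\ln n)^2/32}$, and there are at most $n$ of them, so
$$P\big((B^n_1)^c\big)\leq 2n\,e^{-\kappa(\ln n)^2/32}=2\,e^{\ln n-\kappa(\ln n)^2/32}\longrightarrow 0,$$
because the quadratic $(\ln n)^2$ term eventually dominates the linear $\ln n$ term. The one point requiring care — and the main obstacle — is legitimizing the Chernoff optimization: the optimal parameter $\lambda=u\kappa/s=\kappa\ln s/\sqrt s$ must stay inside the admissible window $[0,c\kappa]$ on which (\ref{largedeviationforY})--(\ref{largedeviationforYII}) hold. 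This requires $\ln s/\sqrt s\leq c$, which fails for small $s$ but holds once $s\geq n^{1/4}$ with $n$ large; this is exactly the role of the lower cutoff $n^{1/4}$ in the definition of $B^n_1$. No other subtlety arises, the remaining manipulations being the routine Chernoff algebra indicated above.
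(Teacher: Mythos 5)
Your proof is correct and follows essentially the same route as the paper's: a union bound over $s\in[n^{1/4},qn]$ combined with an exponential Chernoff bound built from the geometric large-deviation estimates (\ref{largedeviationforY})--(\ref{largedeviationforYII}), with the same tilt parameter ($\lambda=\kappa\ln s/\sqrt{s}$, which is the paper's $t=\kappa\Delta$ with $\Delta=\ln s/\sqrt{s}$) and the same resulting per-term bound $2e^{-\kappa(\ln s)^2/2}=2s^{-\kappa\ln s/2}$. If anything, your write-up is slightly more careful than the paper's, since you explicitly verify that the optimal tilt stays inside the admissible window $[0,c\kappa]$ (equivalently $\Delta\leq c$), a constraint the paper's proof uses implicitly without comment.
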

\begin{proof} Let
$B_{11s}$ be the event
$$B_{11s}:=\{Y_1^*+Y^*_2+\ldots+Y^*_s\leq \ln s\sqrt{s}\}$$
and let
$$B_{12s}:=\{-Y_1^*-Y^*_2-\ldots-Y^*_s\leq \ln s\sqrt{s}\}$$
We have that
$$B^n_1=\bigcap_{s=n^{1/4}}^{nq}\left(B_{11s}\cap
 B_{12s}\right)$$
and hence
\begin{equation}
\label{sum}
P(B^{nc}_1)\leq
\sum_{s=n^{1/4}}^{nq}P(B_{11s}^c)+
\sum_{s=n^{1/4}}^{nq}P( B^c_{12s})
\end{equation}
Recall that for every $t>0$ and any variable $Z$ we have
\begin{equation}
\label{fundamental}
P(Z\geq0)\leq E[e^{tZ}].
\end{equation}
We have
$$P(B^c_{11s})= P((Y_1^*-\Delta)+(Y^*_{2}-\Delta)
+\ldots+(Y^*_s-\Delta)> 0)$$ where $\Delta:= \ln s/\sqrt{s}$. Using
inequality (\ref{fundamental}) yields
\begin{equation}
\label{y*2} P(B^c_{11s})\leq E[e^{t((Y_1^*-\Delta)+(Y^*_2-\Delta)
+\ldots+(Y^*_s-\Delta))}]= \prod_{i=1}^sE[e^{t(Y_i^*-\Delta)}]
\end{equation}
But $Y^*_i=-X_i+1/p_i$, where $X_i$ is a geometric variable with
parameter $p_i$, since by definition $Y^*_i=Y_i-E(Y_i)$ and
$Y_i=2-X_i$. Therefore taking $t=\kappa\Delta$ and applying
inequality (\ref{largedeviationforYII}) to (\ref{y*2}), we obtain
\begin{equation}
\label{Bc11s}
P(B^c_{11s})\leq e^{-0.5\kappa\Delta^2s}=s^{-0.5\kappa \ln s}.
\end{equation}
Similarly one can prove
\begin{equation}
\label{Bc12s}
P(B^c_{12s})\leq e^{-0.5\kappa\Delta^2s}=s^{-0.5\kappa \ln s}.
\end{equation}
Applying inequalities (\ref{Bc11s}) and (\ref{Bc12s}) to inequality
(\ref{sum}) finally gives
$$P(B^{nc}_1)\leq
\sum_{s=n^{1/4}}^{nq}2s^{-0.5\kappa \ln s} $$
and hence $P(B^{nc}_1)$ goes to zero as $n\rightarrow\infty$.
\end{proof}\\[3mm]

To prove that the event $B^n_2$ has high probability we first need
the following lemma:
\begin{lemma}\label{5.5}
for all $i$ such that
$0\leq i\leq (\ln n)^2\sqrt{n}$ we have
that
\begin{equation}
\label{leftmu}\left|\mu_{qn+1}+\mu_{qn+2}+\ldots+\mu_{qn+i}-i\mu_r\right|
\leq (\ln n)^5
\end{equation}
and
\begin{equation}
\label{leftmu2}\left|\mu_{qn-1}+\mu_{qn-2}+\ldots+\mu_{qn-i}-i\mu_r\right|
\leq (\ln n)^5\end{equation}

\end{lemma}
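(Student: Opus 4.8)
The plan is to treat $\mu_s$ as the smooth function $g(s/n)$ of the continuous variable $s/n$ and to estimate the sum by the mean value theorem. Writing $g(x):=2-\frac{1}{1-x}$, definition (\ref{mu}) gives $\mu_{qn+k}=g(q+k/n)$ and $\mu_r=g(q)$, so the quantity to control in (\ref{leftmu}) is
$$\sum_{k=1}^i\bigl(\mu_{qn+k}-\mu_r\bigr)=\sum_{k=1}^i\bigl(g(q+k/n)-g(q)\bigr).$$
The function $g$ is infinitely differentiable on $[0,1[$ with $g'(x)=-1/(1-x)^2$, and in particular $g'$ is continuous and bounded on every compact subinterval of $[0,1[$.

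Next I would apply the mean value theorem to each summand: for every $k\le i$ there is a point $\xi_k\in[q,q+k/n]$ with $g(q+k/n)-g(q)=g'(\xi_k)\,(k/n)$. Since $i\leq(\ln n)^2\sqrt{n}$ we have $i/n\leq(\ln n)^2/\sqrt{n}\to 0$, so for $n$ large each $\xi_k$ lies in the fixed interval $[q,(1+q)/2]$, on which $|g'|\leq M$ with $M:=4/(1-q)^2$ a constant independent of $n$. This yields $|g(q+k/n)-g(q)|\leq Mk/n$, and summing over $k$ gives
$$\left|\sum_{k=1}^i\bigl(\mu_{qn+k}-\mu_r\bigr)\right|\leq\frac{M}{n}\sum_{k=1}^i k=\frac{M\,i(i+1)}{2n}\leq\frac{Mi^2}{n}.$$
Using $i\leq(\ln n)^2\sqrt{n}$ once more gives $i^2/n\leq(\ln n)^4$, hence the left side is at most $M(\ln n)^4\leq(\ln n)^5$ for all $n$ large enough, which is exactly (\ref{leftmu}); note that the spare power of $\ln n$ is precisely what absorbs the constant $M$.

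For (\ref{leftmu2}) the argument is identical, now moving to the left of $q$: the mean value theorem gives $g(q-k/n)-g(q)=-g'(\eta_k)(k/n)$ with $\eta_k\in[q-k/n,q]$, an interval lying even farther from the singularity of $g$ at $x=1$, so the same bound $|g'|\leq M$ applies and the rest of the estimate is verbatim. I expect no genuine obstacle here: the only point demanding care is keeping the evaluation points of $g'$ uniformly bounded away from $1$, which is automatic because $q<1$ is fixed while the displacement $i/n$ tends to $0$. The substance of the lemma is merely that the increments, being linear in $k$ to leading order, accumulate to a sum of order $i^2/n\sim(\ln n)^4$, comfortably inside the claimed bound.
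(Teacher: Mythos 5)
Your proof is correct and follows essentially the same route as the paper: both exploit the smoothness of $x\mapsto 2-\frac{1}{1-x}$ near $q$ to get a Lipschitz bound $|\mu_{qn+k}-\mu_r|\leq C\,k/n$ on each increment, sum it to obtain a quantity of order $i^2/n\leq(\ln n)^4$, and absorb the constant into the spare factor $\ln n$ for $n$ large. The only cosmetic difference is that the paper bounds every term by the worst case $c(\ln n)^2/\sqrt{n}$ and multiplies by $i$, whereas you sum the arithmetic series $\sum_k Mk/n$; the resulting estimate is the same.
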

\begin{proof}
Let $f$ be the map defined by $f(x):=2-(1/(1-x))$. Note that $f$ is
continuously differentiable in a neighborhood of $x=q$. Hence there
exists $\delta>0$ such that for all $\Delta\in[-\delta,\delta]$, we
have
\begin{equation}
\label{fq} |f(q+\Delta)-f(q)|\leq c\cdot\Delta
\end{equation}
where $c>0$ is a constant not depending on $\Delta$.
Note that when $i$ satisfies $0\leq i\leq (\ln n)^2\sqrt{n}$
then
$$\left|\frac{i}{n}\right|\leq \frac{(\ln n)^2}{\sqrt{n}}.$$
The right side of the last inequality above goes to zero as
$n\rightarrow\infty$ and hence for $n$ large enough it is less than
$\delta$. We assume now that $n$ is large enough so that
$$\left|\frac{i}{n}\right|\leq \delta,$$
from which by (\ref{fq}) we get
$$|f(q+\frac in)-f(q)|\leq c\frac{i}{n}\leq c \frac{(\ln n)^2}{\sqrt{n}}$$
and equivalently
$$|\mu_{qn+i}-\mu_r|\leq c\frac{(\ln n)^2}{\sqrt{n}}.$$
Applying the last inequality above to the expression on the left
side of inequality (\ref{leftmu}) gives
$$
\left|\mu_{qn+1}+\mu_{qn+2}+\ldots+\mu_{qn+i}-i\mu_r\right|
\leq ic\frac{(\ln n)^2}{\sqrt{n}}\leq c(\ln n)^4.
$$
The term on the right side of the last inequality above for $n$
large enough is less than $(\ln n)^5$ which finishes proving
(\ref{leftmu}). In a similar way we prove (\ref{leftmu2}).
\end{proof}

\begin{lemma}
\label{6}
We have that
$P(B^n_2)\rightarrow 1$
as $n\rightarrow \infty$
\end{lemma}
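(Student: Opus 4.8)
The plan is to separate the deterministic drift from the random fluctuation and then run the same Chernoff argument used in Lemma \ref{5}, now applied to partial sums started at $s=qn$. Writing $Y_{qn+j}=Y^*_{qn+j}+\mu_{qn+j}$, we have
\[
\sum_{j=1}^{i}Y_{qn+j}-i\mu_r=\sum_{j=1}^{i}Y^*_{qn+j}+\Big(\sum_{j=1}^{i}\mu_{qn+j}-i\mu_r\Big),
\]
and by Lemma \ref{5.5} the parenthesized deterministic term is bounded in absolute value by $(\ln n)^5$. Since $(\ln n)^5\le \tfrac12(\ln n)^3 n^{1/4}$ for $n$ large, it therefore suffices to prove that, with probability tending to one, $|\sum_{j=1}^{i}Y^*_{qn+j}|\le L$ for all $i\le (\ln n)^2\sqrt n$, where $L:=\tfrac12(\ln n)^3 n^{1/4}$; the same reduction applies verbatim to the sums indexed by $qn-j$ using the second inequality of Lemma \ref{5.5}.

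First I would record the exponential moment bound hidden in the large-deviation lemma. For a recentered geometric variable $Y^*=-X+1/p$ with $p$ in the relevant range, setting $t=\kappa\Delta$ in inequality (\ref{largedeviationforYII}) and multiplying through by $e^{\kappa\Delta^2}$ gives $E[e^{tY^*}]\le e^{t^2/(2\kappa)}$ valid for all $t\in[0,\kappa c]$, while inequality (\ref{largedeviationforY}) gives the same bound for $E[e^{-tY^*}]$. The parameters occurring here are $p_{qn\pm j}=1-q\mp j/n$, which for $j\le (\ln n)^2\sqrt n$ differ from $1-q$ by at most $(\ln n)^2/\sqrt n=o(1)$; hence all of them lie in a fixed interval $[1-q-\eta,1]$ with $1-q-\eta>0$, on which the large-deviation lemma holds by the same proof (the constant $\kappa$ being the minimum of a continuous, strictly positive function over a compact interval bounded away from $0$).

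With this sub-Gaussian estimate in hand, I would bound a single tail using the fundamental inequality (\ref{fundamental}): for any $t\in[0,\kappa c]$,
\[
P\Big(\sum_{j=1}^{i}Y^*_{qn+j}>L\Big)\le e^{-tL}\prod_{j=1}^{i}E[e^{tY^*_{qn+j}}]\le e^{-tL+it^2/(2\kappa)}.
\]
The unconstrained optimum is $t^*=\kappa L/i$. When $L/i\le c$ this $t^*$ is admissible and yields $e^{-\kappa L^2/(2i)}\le e^{-(\kappa/8)(\ln n)^4}$, using $i\le (\ln n)^2\sqrt n$ together with the value of $L$. When $L/i> c$ (the small-$i$ case) the exponent is decreasing on $[0,t^*]$, so taking the boundary value $t=\kappa c$ and using $i<L/c$ gives the even smaller bound $e^{-\kappa c L/2}$. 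The lower tail, and both tails of the $qn-j$ sums, are handled identically via the bound on $E[e^{-tY^*}]$.

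Finally I would union-bound over the at most $(\ln n)^2\sqrt n+1$ admissible values of $i$ and the four families of tail events, obtaining
\[
P\big((B^n_2)^c\big)\le 4\big((\ln n)^2\sqrt n+1\big)\,e^{-(\kappa/8)(\ln n)^4},
\]
which tends to $0$ because the subexponential prefactor is overwhelmed by the factor $e^{-(\kappa/8)(\ln n)^4}$. The one genuinely delicate point is the small-$i$ regime: there the natural per-variable shift $L/i$ exceeds the range $[0,c]$ on which the large-deviation lemma is available, so a direct imitation of Lemma \ref{5} breaks down. Decoupling the Markov parameter $t$ from the centering shift — that is, working with the sub-Gaussian bound $E[e^{tY^*}]\le e^{t^2/(2\kappa)}$ and clamping $t$ at $\kappa c$ — is what repairs this, and it is the step I would expect to require the most care.
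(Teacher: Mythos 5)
Your proof is correct, and it fills in a lemma for which the paper itself offers no argument beyond the one-line hint ``use Lemma \ref{5.5} and the Hoeffding inequality.'' Your first half follows exactly the hinted skeleton: the decomposition $Y_{qn+j}=Y^*_{qn+j}+\mu_{qn+j}$, with Lemma \ref{5.5} absorbing the drift term $\bigl|\sum_{j}\mu_{qn+j}-i\mu_r\bigr|\leq(\ln n)^5$ into half of the allowed deviation $(\ln n)^3 n^{1/4}$, is the intended reduction. Where you genuinely depart from the hint is the concentration step: instead of Hoeffding --- which in its classical form requires bounded summands, whereas the recentered geometrics $Y^*_{qn\pm j}$ are unbounded, so the hinted route would additionally need a truncation argument --- you recycle the paper's own large-deviation lemma, recast as the sub-Gaussian bound $E[e^{\pm tY^*}]\leq e^{t^2/(2\kappa)}$ for $t\in[0,\kappa c]$, and run a Chernoff bound, clamping $t$ at $\kappa c$ in the small-$i$ regime where the optimal shift $\kappa L/i$ leaves the admissible range. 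This buys two things: it avoids truncation entirely, and it stays within tools the paper has actually proved, in the same spirit as the proof of Lemma \ref{5}. You also catch and repair a gap that the hint glosses over: the large-deviation lemma is stated only for parameters $p\in[1-q,1]$, while the forward sums in $B^n_2$ involve parameters $p_{qn+j}=1-q-j/n<1-q$; your observation that the lemma's proof extends verbatim to a compact interval $[1-q-\eta,1]$ with $1-q-\eta>0$ (the constant $\kappa$ remaining a positive minimum over that interval) is needed for \emph{any} correct proof of this lemma, and you handle it properly. The quantitative conclusion $P\bigl((B^n_2)^c\bigr)\leq 4\bigl((\ln n)^2\sqrt{n}+1\bigr)e^{-(\kappa/8)(\ln n)^4}\rightarrow 0$ is right, since $e^{-(\kappa/8)(\ln n)^4}=n^{-(\kappa/8)(\ln n)^3}$ decays faster than any polynomial.
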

\begin{proof}
Hint: Use lemma \ref{5.5} and the Hoeffding inequality.
\end{proof}\\[3mm]

\noindent{\bf Proof of the main theorem \ref{maintheorem}}
Lemma \ref{lemmacombinatorics} states that
when $B_0^n$, $B^n_1$ and $B^n_2$ all hold then
\begin{equation}
\label{maininequality}
\left|\;
\frac{\tau-qn}{\sigma\sqrt{n}}\;-\;
\frac{-Y^*_1-Y^*_2-\ldots-Y^*_{qn}}
{\sqrt{\int_0^q\frac{x}{(1-x)^2}dx}\sqrt{n}}\;\right|\leq
\frac{2(\ln n)^3}{\sigma n^{1/4}}.
\end{equation}
From lemma \ref{lemmavar} and equality (\ref{variance}) it follows
that the standard deviation of the sum
$$Y^*_1+Y^*_2+\ldots +Y^*_{qn}$$ is equal up to a constant term
to $$\sqrt{\int_0^q\frac{x}{(1-x)^2}dx}\sqrt{n}.$$ Hence by the
Central Limit Theorem for independent but non-identical variables we
have that the re-scaled sum
$$\frac{-Y^*_1-Y^*_2-\ldots-Y^*_{qn}}
{\sqrt{\int_0^q\frac{x}{(1-x)^2}dx}\sqrt{n}}$$ converges weakly to a
Standard Normal variable. From this and from the fact that
inequality (\ref{maininequality}) holds with probability converging
to one when $n$ goes to infinity we get that
$(\tau-qn)/(\sigma\sqrt{n})$ converges weakly to a standard normal.
We also used the fact that the right side of (\ref{maininequality})
goes to zero as $n$ goes to infinity. Inequality
(\ref{maininequality}) holds with probability going to one when $n$
goes to infinity, because the events $B^n_0$, $B^n_1$ and $B^n_2$,
which together imply (\ref{maininequality}), all have their
probabilities going to one as $n$ goes to infinity.

\section*{Acknowledgements}

Authors are thankful to FAPESP and CNPq for the financial support. 
Thanks are also due for the anonymous referees for their careful reading, 
corrections, criticism and suggestions which helped us to improve the paper.

%\bibliographystyle{plain}
%\bibliography{bio05,quasi,scen,additional2,popov}

\end{document}